\numberwithin{equation}{section}
\theoremstyle{plain}
\newtheorem{Theorem}{Theorem}[section]
\newtheorem{Lemma}[Theorem]{Lemma}
\newtheorem{Cor}[Theorem]{Corollary}
\newtheorem{Prop}[Theorem]{Proposition}
\theoremstyle{remark}
\newtheorem{Rem}[Theorem]{Remark}
\theoremstyle{definition}
\DeclareMathOperator{\N}{\mathbb{N}}
\DeclareMathOperator{\R}{\mathbb{R}}
\newcommand{\mn}{\mathbb{N}}
\newcommand{\me}{\mathbb{E}}
\DeclareMathOperator{\Prob}{\mathbb{P}}
\DeclareMathOperator{\E}{\mathbb{E}}
\DeclareMathOperator{\1}{\mathbbm{1}}
\title{A law of the iterated logarithm for the number of occupied boxes in the Bernoulli sieve}
\date{\today}
\author{Alexander Iksanov\footnote{Faculty of Computer Science and Cybernetics, National Taras
Shevchenko University of Kyiv, 01601 Kyiv, Ukraine, e-mail:
iksan@univ.kiev.ua}, Wissem Jedidi \footnote{Department of
Statistics  \& OR, King Saud University, P.O. Box 2455, Riyadh
11451, Saudi Arabia and Universit\'e de Tunis El Manar, Facult\'e
des Sciences de Tunis, LR11ES11 Laboratoire d'Analyse
Math\'ematiques et Applications, 2092, Tunis, Tunisia, e-mail:
wissem$_-$jedidi@yahoo.fr} \ and \ Fethi Bouzeffour
\footnote{Department of Mathematics, College of Sciences, King
Saud University, Riyadh 11451, Saudi Arabia, e-mail:
fbouzaffour@ksu.edu.sa}}
\begin{document}
\maketitle
\begin{abstract}
The Bernoulli sieve is an infinite occupancy scheme obtained by
allocating the points of a uniform $[0,1]$ sample over an infinite
collection of intervals made up by successive positions of a
multiplicative random walk independent of the uniform sample. We
prove a law of the iterated logarithm for the number of non-empty
(occupied) intervals as the size of the uniform sample becomes
large.
\end{abstract}

\bigskip

{\noindent \textbf{AMS 2010 subject classifications:} primary
60F15; secondary 60K05}

{\noindent \textbf{Keywords:} Bernoulli sieve, infinite occupancy,
law of iterated logarithm, perturbed random walk, renewal theory}

\section{Introduction}

Let $R:=(R_k)_{k\in\mn_0}$ be a multiplicative random walk defined
by
$$
R_0:=1, \quad R_k:=\prod_{i=1}^k W_i, \quad k\in\mn
$$
where $(W_k)_{k\in\mn}$ are independent copies of a random
variable $W$ taking values in the open interval $(0,1)$. Also, let
$(U_j)_{j\in\mn}$ be independent random variables which are
independent of $R$ and have the uniform distribution on $[0,1]$. A
random occupancy scheme in which `balls'\, $U_1$, $U_2$, etc.\ are
allocated over an infinite array of `boxes'\, $(R_k, R_{k-1}]$,
$k\in\mn$ is called {\it Bernoulli sieve}. The Bernoulli sieve was
introduced in \cite{Gnedin:2004} and further investigated in
numerous articles which can be traced via the references given in
the recent work \cite{Alsmeyer+Iksanov+Marynych:2016}. We also
refer to \cite{Alsmeyer+Iksanov+Marynych:2016} for more details
concerning the Bernoulli sieve including the origin of this term.

Since a particular ball falls into the box $(R_k, R_{k-1}]$ with
random probability
\begin{equation}\label{frequencies}
p^\ast_k:=R_{k-1}-R_k=W_1W_2\cdot\ldots\cdot W_{k-1}(1-W_k),
\end{equation}
the Bernoulli sieve is also the classical infinite occupancy
scheme with the {\it random probabilities} $(p^\ast_k)_{k\in\mn}$.
In this setting, given the random probabilities $(p^\ast_k)$, the
balls are allocated over the boxes $(R_1,R_0], (R_2,R_1],\ldots$
independently with probability $p^\ast_j$ of hitting box $j$.
Assuming that the number of balls equals $n$, denote by $K^\ast_n$
the number of non-empty boxes.

Under the condition $\sigma^2:={\rm Var}|\log W|\in (0,\infty)$
(which implies that $\mu:=\me |\log W|<\infty$) it was shown in
Corollary 1.1 of \cite{Gnedin+Iksanov+Marynych:2010} that, as
$n\to\infty$,
$$\frac{K^\ast_{[e^n]}-\mu^{-1}\int_0^n\Prob\{|\log(1-W)|\leq
y\}{\rm d}y}{\sqrt{\sigma^2\mu^{-3}n}}$$ converges in distribution
to the standard normal law. The same conclusion can also be
derived from a functional limit theorem obtained recently in
\cite{Alsmeyer+Iksanov+Marynych:2016}. The purpose of the present
article is to obtain a law of the iterated logarithm that
corresponds to the aforementioned central limit theorem.

For a family or a sequence $(x_t)$ denote by $C((x_t))$ the set of its limit points.
\begin{Theorem}\label{main}
Assume that $\sigma^2\in (0,\infty)$ and that $\me
|\log(1-W)|^a<\infty$ for some $a>0$. Then
$$C\bigg(\bigg(\frac{K^\ast_{[e^n]}-\mu^{-1}\int_0^n\Prob\{|\log(1-W)|\leq y\}{\rm d}y}{\sqrt{2\sigma^2\mu^{-3}n\log\log
n}}:n\geq 3\bigg)\bigg)=[-1,1]\quad\text{{\rm a.s.}}$$ In
particular,
$${\lim\sup\,(\lim
\inf)}_{n\to\infty}\frac{K^\ast_{[e^n]}-\mu^{-1}\int_0^n\Prob\{|\log(1-W)|\leq
y\}{\rm d}y}{\sqrt{2n\log\log
n}}=+(-)\sigma\mu^{-3/2}\quad\text{{\rm a.s.}}$$ 
\end{Theorem}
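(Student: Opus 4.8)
The plan is to establish the law of the iterated logarithm (LIL) for the number of occupied boxes $K^\ast_n$ by transferring an LIL from a more tractable auxiliary process and then controlling the approximation error. The key structural fact about the Bernoulli sieve is that the number of occupied boxes is governed by a renewal-type quantity. Specifically, I would first recall the standard regenerative/coupling representation: conditionally on the sieve structure $(R_k)$, the ball-allocation is a multinomial scheme, and $K^\ast_n$ is closely tied to the number of renewals before level $\log n$ of the perturbed random walk $(\log(1/R_{k-1}), \log(1/R_k))$. Let me denote by $\rho(t)$ the first-passage (renewal) counting function associated with the ordinary random walk $S_k := |\log W_1| + \cdots + |\log W_k|$, i.e. $\rho(t) = \#\{k \ge 1 : S_{k-1} \le t\}$. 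The deterministic centering in the theorem, $\mu^{-1}\int_0^n \Prob\{|\log(1-W)| \le y\}\,{\rm d}y$, is exactly the asymptotic mean that arises from the renewal function $U(t) \sim t/\mu$ convolved against the factor counting which boxes receive at least one ball.

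**First I would** prove the LIL for the renewal counting process itself. Since $\sigma^2 = \Var|\log W| \in (0,\infty)$, the classical Strassen-type LIL for renewal processes (equivalently, for first-passage times of random walks with finite variance) gives
$$
C\Bigl(\Bigl(\tfrac{\rho(n) - n/\mu}{\sqrt{2\sigma^2\mu^{-3} n \log\log n}} : n \ge 3\Bigr)\Bigr) = [-1,1] \quad \text{a.s.},
$$
which follows from Strassen's functional LIL applied to $S_k$ together with the inverse relationship between $\rho$ and $S$; the variance constant $\sigma^2\mu^{-3}$ is the standard renewal-LIL constant. I would then argue that $K^\ast_{[e^n]}$ differs from $\rho(n)$ (or a suitable renewal-type functional incorporating the $1-W$ factor through the centering) by a term that is $o(\sqrt{n \log\log n})$ almost surely. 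The cluster-set $[-1,1]$ and the explicit $\limsup/\liminf$ values then transfer verbatim, since adding an a.s.-negligible perturbation to a sequence does not change its set of limit points after normalization by $\sqrt{2\sigma^2\mu^{-3} n\log\log n}$.

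**The hard part will be** controlling the discrepancy between $K^\ast_{[e^n]}$ and the renewal functional at the LIL scale $\sqrt{n\log\log n}$, which is \emph{finer} than the CLT scale $\sqrt{n}$. The decomposition naturally splits into two pieces: (i) the difference between $K^\ast_n$ and its ``quenched mean'' given $(R_k)$, which is a sum of conditionally independent indicators (a martingale-type fluctuation), and (ii) the difference between that quenched mean and the deterministic centering, which is a renewal-theoretic fluctuation. For piece (i), I would need an almost-sure bound showing the conditional fluctuations are $o(\sqrt{n\log\log n})$ a.s.; this is where the moment hypothesis $\me|\log(1-W)|^a < \infty$ enters, controlling the tail of the number of boxes that are ``partially filled'' and ensuring the occupancy indicators do not contribute at the LIL scale. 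A Borel--Cantelli argument along the subsequence $n_j$ combined with a monotonicity/oscillation estimate between consecutive integers should suffice. For piece (ii), the renewal LIL from the first step applies directly, and one must verify that the $\Prob\{|\log(1-W)| \le y\}$ weighting in the centering matches the asymptotic renewal density $1/\mu$ up to an error summable against the LIL normalization — precisely the role of the integral centering, which absorbs the first-order renewal behavior. I expect piece (i), the passage from the annealed CLT-scale control to the quenched LIL-scale control, to be the genuine obstacle, requiring either a strong approximation (KMT-type) coupling of the occupancy counts to a Brownian motion or a careful exponential-inequality argument uniform over the relevant range of $n$.
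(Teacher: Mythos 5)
Your overall architecture matches the paper's: approximate $K^\ast_{[e^n]}$ by a renewal-type count, prove an LIL for that count, and show the approximation error is a.s.\ negligible at the LIL scale. However, there is a genuine gap in your piece (ii), which is precisely where the paper has to do its real work. The relevant auxiliary process is not the ordinary renewal counting process $\rho(n)$ of the walk $S_k=|\log W_1|+\cdots+|\log W_k|$, but the perturbed-walk count $N^\ast(n)=\sum_{k\geq 0}\1_{\{S_k+\eta_{k+1}\leq n\}}$ with $\eta_k=|\log(1-W_k)|$, and its centering $\mu^{-1}\int_0^n F(y)\,{\rm d}y$ (with $F(y)=\Prob\{|\log(1-W)|\leq y\}$) can differ from $n/\mu$ by a term of order $n^{1-a}$ when only $\me|\log(1-W)|^a<\infty$ for small $a$; this difference swamps $\sqrt{n\log\log n}$, so your claim that ``the renewal LIL from the first step applies directly'' is false. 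What the paper actually does (Proposition 2.4) is: (a) approximate $N^\ast(n)$ a.s.\ to order $o(n^{1/2})$ by the convolution $\int_{[0,n]}F(n-y)\,{\rm d}\nu(y)$, where $\nu$ is the renewal process of $(S_k)$ --- this step, not the occupancy fluctuation, is where the moment hypothesis $\me|\log(1-W)|^a<\infty$ enters, contrary to your attribution; and (b) split the convolution (after integration by parts) into a part with $y\in[0,\delta]$, which inherits the cluster set of $(\nu(n)-\mu^{-1}n)/\sqrt{2\sigma^2\mu^{-3}n\log\log n}$, and a tail part, which is killed as $\delta\to\infty$ using a \emph{uniform} LIL
$$\limsup_{n\to\infty}\frac{\sup_{0\leq y\leq n}|\nu(y)-\mu^{-1}y|}{\sqrt{2n\log\log n}}=\sigma\mu^{-3/2}\quad\text{a.s.}$$
This sup-version is not the classical pointwise renewal LIL; the authors remark they could not find it in the literature and derive it from Strassen's functional LIL via the continuous mapping theorem. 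Without this ingredient (or a substitute), your piece (ii) has no proof.

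Conversely, your piece (i) is substantially easier than you anticipate: no KMT-type strong approximation is needed. The paper conditions on $(p_k^\ast)$, compares $K_n$ with the count of ``large boxes'' $\sum_{k\geq 1}\1_{\{np_k^\ast\geq 1\}}=\rho^\ast(n)$ (rather than the quenched mean), bounds the fourth moment of the discrepancy by explicit functionals $\Theta_n,\Delta_n$ of the frequencies, verifies the required summability using fourth-moment bounds on increments of $\rho^\ast$ (obtained from a Burkholder--Davis--Gundy estimate for an associated martingale), and concludes by Borel--Cantelli that $K^\ast_{[e^n]}-\rho^\ast(e^n)=o(n^{1/2})$ a.s. So the balance of difficulty in your proposal is inverted: the occupancy-versus-renewal comparison is routine (if delicate) fourth-moment work, while the LIL for the perturbed-walk count is the step that requires a new idea.
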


The proof of Theorem \ref{main} given in Section \ref{pr} relies
upon a number of auxiliary results that are stated and proved in
Section \ref{au}.

\section{Proof of Theorem \ref{main}}\label{pr}

Let $(\xi_k, \eta_k)_{k\in\mn}$ be a sequence of i.i.d.\
two-dimensional random vectors with generic copy $(\xi,\eta)$
where both $\xi$ and $\eta$ are positive. No
condition is imposed on the dependence structure between $\xi$ and
$\eta$. Set
\begin{equation*}
N(x)~:=~\sum_{k\geq 0}\1_{\{S_k+\eta_{k+1}\leq x\}}, \quad x \geq
0
\end{equation*}
where $(S_n)_{n\in\mn_0}$ is the zero-delayed ordinary random walk
with increments $\xi_n$ for $n\in\mn$, i.e., $S_0 = 0$ and $S_n =
\xi_1+\ldots+\xi_n$, $n \in \mn$.

Set $\rho^\ast(x):=\sum_{k\geq 0}\1_{\{p_k^\ast\geq 1/x\}}$ for
$x>0$. It is natural to call $\rho^\ast(x)$ the number of `large
boxes' in the Bernoulli sieve. Relevance of $N(x)$ to the present
context is justified by the equality
\begin{equation}\label{equ}
\rho^\ast(x)=N^\ast(\log x)
\end{equation}
where the random variable $N^\ast(x)$ corresponds to $\xi_k=|\log
W_k|$ and $\eta_k=|\log (1-W_k)|$ for $k\in\mn$.

Our strategy is as follows. First, we show in Corollary
\ref{important2} that the number of occupied boxes $K_n^\ast$ is
well-approximated in the a.s.\ sense by $\rho^\ast(n)$. A similar
approximation in the sense of distributional convergence was
established in \cite{Gnedin+Iksanov+Marynych:2010} and
\cite{Alsmeyer+Iksanov+Marynych:2016}. We would like to stress
that proving the a.s.\ approximation is more delicate and calls
for an additional argument. Second, we prove in Proposition
\ref{imp2} a law of the iterated logarithm for $N(x)$ defined in
terms of arbitrary perturbed random walk. In view of \eqref{equ}
these two results are sufficient to complete the proof of Theorem
\ref{main}.

We consider the infinite occupancy scheme in which balls are
allocated independently with probability $p_k$ of hitting box $k$.
Denote by $K_n$ be the number of occupied boxes in the scheme when
$n$ balls have been thrown. For $n\in\mn$ set
$\Theta_n:=\sum_{k\geq 1}e^{-np_k}\1_{\{np_k\geq 1\}}$ and
$\Delta_n:=n \sum_{k\geq 1}p_k\1_{\{np_k<1\}}$.

\begin{Lemma}\label{aux-1}
Suppose that
\begin{equation}\label{cond1}
\sum_{n\geq 1}n^{-2}\Theta^4_{[e^n]}<\infty\quad\text{and}\quad
\sum_{n\geq 1}n^{-2}\Delta^4_{[e^n]}<\infty.
\end{equation}
Then
\begin{equation}\label{inter} \lim_{n\to\infty}\,
n^{-1/2}\big(K_{[e^n]}-\sum_{k\geq 1}\1_{\{[e^n]p_k\geq
1\}}\big)=0\quad\text{{\rm a.s.}}
\end{equation}
\end{Lemma}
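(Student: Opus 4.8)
The plan is to split the centered quantity into a nonnegative ``small-box'' overshoot and a nonnegative ``large-box'' undershoot, bound their expectations by $\Delta_m$ and $\Theta_m$, and then kill the fluctuations by a fourth-moment Borel--Cantelli argument calibrated exactly to \eqref{cond1}. Write $m:=[e^n]$ and let $Z_{m,k}$ be the number of the $m$ balls falling into box $k$, so that $K_m=\sum_k\1_{\{Z_{m,k}\geq 1\}}$. Splitting the boxes according to whether $mp_k\geq 1$ or $mp_k<1$ yields the exact identity
$$K_m-\sum_{k\geq 1}\1_{\{mp_k\geq 1\}}=A_m-B_m,\quad A_m:=\sum_{k:\,mp_k<1}\1_{\{Z_{m,k}\geq 1\}},\quad B_m:=\sum_{k:\,mp_k\geq 1}\1_{\{Z_{m,k}=0\}},$$
both summands being nonnegative. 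Since it suffices to prove $n^{-1/2}A_m\to 0$ and $n^{-1/2}B_m\to 0$ a.s.\ separately, I would treat $A_m$ and $B_m$ in parallel.

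First I would dispose of the means. Using $1-(1-p_k)^m\leq mp_k$ and $(1-p_k)^m\leq e^{-mp_k}$ one gets $\E A_m\leq \Delta_m$ and $\E B_m\leq \Theta_m$. A convergent series of nonnegative terms has vanishing general term, so \eqref{cond1} forces $n^{-2}\Delta_{[e^n]}^4\to 0$ and $n^{-2}\Theta_{[e^n]}^4\to 0$, whence $n^{-1/2}\Delta_{[e^n]}\to 0$ and $n^{-1/2}\Theta_{[e^n]}\to 0$. Consequently $n^{-1/2}\E A_m\to 0$ and $n^{-1/2}\E B_m\to 0$, so the deterministic centering contributes nothing after normalization.

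It then remains to show $n^{-1/2}(A_m-\E A_m)\to 0$ and $n^{-1/2}(B_m-\E B_m)\to 0$ a.s. Here I would combine a fourth-moment estimate with Borel--Cantelli along the already exponentially sparse index $m=[e^n]$. By Markov's inequality, for fixed $\varepsilon>0$,
$$\Prob\{n^{-1/2}|A_m-\E A_m|>\varepsilon\}\leq \varepsilon^{-4}n^{-2}\,\E(A_m-\E A_m)^4,$$
so it is enough to establish $\E(A_m-\E A_m)^4\leq C(\Delta_m^4+1)$ and the analogous $\E(B_m-\E B_m)^4\leq C(\Theta_m^4+1)$. Granting these, the two parts of \eqref{cond1} together with $\sum_n n^{-2}<\infty$ make the right-hand sides summable in $n$; Borel--Cantelli yields $\limsup_n n^{-1/2}|A_m-\E A_m|\leq\varepsilon$ a.s., and letting $\varepsilon\downarrow 0$ along a countable sequence gives the claim, which combined with the mean estimate proves $n^{-1/2}(A_m-B_m)\to 0$ a.s.

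The main obstacle is precisely the fourth-moment bound, since under a fixed budget of $m$ balls the counts $(Z_{m,k})_k$ are multinomial and the indicators forming $A_m$ and $B_m$ are dependent. I would resolve this through negative association: the multinomial vector is negatively associated, and coordinatewise monotone maps preserve this property, so $(\1_{\{Z_{m,k}\geq 1\}})_k$ and $(\1_{\{Z_{m,k}=0\}})_k$ are each negatively associated. By a Rosenthal-type comparison inequality for negatively associated variables (Shao's comparison theorem), the fourth central moment of $A_m$ is dominated by that of a sum of \emph{independent} Bernoulli variables with the same marginal success probabilities $q_k:=1-(1-p_k)^m$. For the independent surrogate the standard expansion gives $\E(A_m-\E A_m)^4\leq \sum_k q_k+3(\sum_k q_k)^2\leq \E A_m+3(\E A_m)^2\leq \Delta_m+3\Delta_m^2$, and since $x+3x^2\leq C(1+x^4)$ for $x\geq 0$ this is the asserted $O(\Delta_m^4+1)$; the estimate for $B_m$ is identical with $q_k$ replaced by $(1-p_k)^m\1_{\{mp_k\geq 1\}}$ and $\Delta_m$ by $\Theta_m$. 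Should one prefer to avoid negative association, the same bounds follow by Poissonizing the number of balls and controlling the de-Poissonization error via the concentration of a Poisson$(m)$ variable about $m$.
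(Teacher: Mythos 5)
Your proposal is correct, but the engine driving it is genuinely different from the paper's. Both proofs start from the same decomposition into the two nonnegative sums $A_m=\sum_{k:\,mp_k<1}\1_{\{Z_{m,k}\geq 1\}}$ and $B_m=\sum_{k:\,mp_k\geq 1}\1_{\{Z_{m,k}=0\}}$ and both finish with a fourth-moment Markov bound plus Borel--Cantelli along $m=[e^n]$. The difference is how the fourth moment is controlled. The paper does \emph{not} center at all: it expands the raw fourth power of each sum of indicators via the multinomial theorem and bounds every joint probability by an explicit product, namely $\Prob\{Z_{m,i_1}=0,\ldots,Z_{m,i_r}=0\}\leq e^{-m(p_{i_1}+\cdots+p_{i_r})}$ and $\Prob\{Z_{m,i_1}\geq 1,\ldots,Z_{m,i_r}\geq 1\}\leq m^r p_{i_1}\cdots p_{i_r}$, which yields directly $\me A_m^4\leq \Delta_m+7\Delta_m^2+6\Delta_m^3+\Delta_m^4$ and $\me B_m^4\leq \Theta_m+7\Theta_m^2+6\Theta_m^3+\Theta_m^4$; these are summable against $n^{-2}$ under \eqref{cond1}, and no separate treatment of the means is needed. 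You instead center, and then invoke negative association of the multinomial vector, closure of NA under coordinatewise monotone maps, and Shao's convex-moment comparison theorem to reduce to independent Bernoullis, where the classical expansion gives $\me(A_m-\me A_m)^4\leq \Delta_m+3\Delta_m^2$; this forces the extra (easy) step that $n^{-1/2}\me A_{[e^n]}\to 0$, which you correctly extract from \eqref{cond1} via vanishing of the general term of a convergent series. Your route buys modularity: the NA/Shao argument works verbatim for any even moment and does not require redoing the combinatorics, whereas the paper's route buys self-containedness --- its product bounds on joint probabilities are exactly the elementary, hands-on manifestation of the negative dependence you import as a black box. Two small points you should make explicit if you write this up: the NA property for infinitely many boxes follows by lumping all but finitely many boxes into one category and using that subvectors of NA vectors are NA, and the passage from finite truncations of $A_m$ to the full (possibly infinite) sum is justified by Fatou's lemma, the bound $\Delta_m+3\Delta_m^2$ being uniform in the truncation level.
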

\begin{proof}
We shall use a representation $K_n=\sum_{k\geq 1}\1_{\{Z_{n,k}\geq
1\}}$ where $Z_{n,k}$ is the number of balls that fall in box $k$.
Observe that the random variable $Z_{n,k}$ has the binomial
distribution with parameters $n$ and $p_k$. With this at hand we
can write $$\big|K_n-\sum_{k\geq 1}\1_{\{np_k\geq 1\}}\big|\leq
\sum_{k\geq 1}\1_{\{Z_{n,k}=0\}}\1_{\{np_k\geq 1\}}+\sum_{k\geq
1}\1_{\{Z_{n,k}\geq 1\}}\1_{\{np_k<1\}}.$$ Let $(A_k)_{k\in\mn}$
be a sequence of sets which satisfy $\sum_{k\geq
1}\1_{A_k}<\infty$. The multinomial theorem tells us that
\begin{eqnarray}\label{formula}
(\sum_{k\geq 1}\1_{A_k})^4&=&\sum_{k\geq 1}\1_{A_k}+14\sum_{1\leq
j<i}\1_{A_j}\1_{A_i}\notag\\&+& 36\sum_{1\leq
j<i<l}\1_{A_j}\1_{A_i}\1_{A_l}+24\sum_{1\leq
j<i<l<m}\1_{A_j}\1_{A_i}\1_{A_l}\1_{A_m}.
\end{eqnarray}
Further, for $m\in\mn$ and distinct $i_1,\ldots, i_m\in \mn$
\begin{equation}\label{form2}
\Prob\{Z_{n,\,i_1}=0,\ldots,
Z_{n,\,i_m}=0\}=(1-p_{i_1}-\ldots-p_{i_m})^n\leq
e^{-n(p_{i_1}+\ldots+p_{i_m})}.
\end{equation}
Even though there is a precise formula
\begin{eqnarray*}
\Prob\{Z_{n,\,i_1}\geq 1,\ldots, Z_{n,\,i_m}\geq
1\}&=&1-\sum_{k=1}^m(1-p_{i_k})^n+\sum_{1\leq k<l\leq m}
(1-p_{i_k}-p_{i_l})^n\\&+&\ldots+(-1)^m(1-p_{i_1}-\ldots-p_{i_m})^n,
\end{eqnarray*}
a crude upper bound is of greater use for our needs:
$$\Prob\{Z_{n,\,i_1}\geq 1,\ldots, Z_{n,\,i_m}\geq 1\}\leq n(n-1)\cdot\ldots\cdot (n-m+1)p_{i_1}\cdot\ldots\cdot p_{i_m}\leq n^m p_{i_1}\cdot\ldots\cdot p_{i_m}.$$ While the product $p_{i_1}\cdot\ldots\cdot p_{i_m}$ is the probability of the event that the boxes $i_1,\ldots, i_m$ turn out occupied when throwing $m$ balls, the product $n(n-1)\cdot\ldots\cdot (n-m+1)=m!\binom{n}{m}$ is the number of ways to allocate $m$ balls out of $n$ into the boxes $i_1,\ldots,i_m$.

Using \eqref{formula} and then \eqref{form2} we obtain
\begin{eqnarray*}
&&\me \bigg(\sum_{k\geq 1}\1_{\{Z_{n,k}=0\}}\1_{\{n p_k\geq
1\}}\bigg)^4\\&\leq& \sum_{k\geq 1} e^{-np_k}\1_{\{np_k\geq
1\}}+14\sum_{1\leq j<i}e^{-np_j}\1_{\{np_j\geq
1\}}e^{-np_i}\1_{\{np_i\geq 1\}}\\&+&36\sum_{1\leq
j<i<l}e^{-np_j}\1_{\{np_j\geq 1\}}e^{-np_i}\1_{\{np_i\geq
1\}}e^{-np_l}\1_{\{np_l\geq 1\}}\\&+&24\sum_{1\leq
j<i<l<m}e^{-np_j}\1_{\{np_j\geq 1\}}e^{-np_i}\1_{\{np_i\geq
1\}}e^{-np_l}\1_{\{np_l\geq 1\}}e^{-np_m}\1_{\{np_m\geq
1\}}\\&\leq& \Theta_n+7\Theta_n^2+6\Theta_n^3+\Theta_n^4.
\end{eqnarray*}
This in combination with \eqref{cond1} entails
$\lim_{n\to\infty}\, n^{-1/2}\sum_{k\geq
1}\1_{\{Z_{[e^n],k}=0\}}\1_{\{[e^n]p_k\geq 1\}}=0$ a.s.\ by the
Borel-Cantelli lemma.

Arguing similarly we infer
\begin{eqnarray*}
&&\me \bigg(\sum_{k\geq 1}\1_{\{Z_{n,k}\geq 1\}}\1_{\{n p_k<
1\}}\bigg)^4\\&\leq& n\sum_{k\geq 1}
p_k\1_{\{np_k<1\}}+14n^2\sum_{1\leq
j<i}p_j\1_{\{np_j<1\}}p_i\1_{\{np_i<1\}}\\&+&36n^3\sum_{1\leq
j<i<l}p_j\1_{\{np_j<1\}}p_i\1_{\{np_i<1\}}p_l\1_{\{np_l<
1\}}\\&+&24n^4\sum_{1\leq j<i<l<m}p_j\1_{\{np_j<1\}}p_i\1_{\{np_i<
1\}}p_l\1_{\{np_l<1\}}p_m\1_{\{np_m<1\}}\\&\leq&
\Delta_n+7\Delta_n^2+6\Delta_n^3+\Delta_n^4
\end{eqnarray*}
which in combination with \eqref{cond1} proves
$\lim_{n\to\infty}\, n^{-1/2}\sum_{k\geq 1}\1_{\{Z_{[e^n],k}\geq
1\}}\1_{\{[e^n]p_k< 1\}}=0$ a.s.\ by another appeal to the
Borel-Cantelli lemma.
\end{proof}

\begin{Cor}\label{important2}

$$\lim_{n\to\infty}\, n^{-1/2}\big(K^\ast_{[e^n]}-\rho^\ast(e^n)\big)~=~ 0\quad\text{{\rm a.s.}}$$

\end{Cor}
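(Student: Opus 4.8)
The plan is to deduce the corollary from Lemma \ref{aux-1} applied to the \emph{random} probabilities $(p^\ast_k)_{k\in\mn}$. Because the uniform sample $(U_j)$ is independent of $R$, conditionally on $(W_k)_{k\in\mn}$ (equivalently, on $(p^\ast_k)_{k\in\mn}$) the Bernoulli sieve is precisely the occupancy scheme of Lemma \ref{aux-1} with $K_n$ and $p_k$ replaced by $K^\ast_n$ and $p^\ast_k$. Writing $\Theta^\ast_n$ and $\Delta^\ast_n$ for the random analogues of $\Theta_n$ and $\Delta_n$, and noting that $\sum_{k\geq 1}\1_{\{[e^n]p^\ast_k\geq 1\}}=\sum_{k\geq 1}\1_{\{p^\ast_k\geq 1/[e^n]\}}=\rho^\ast([e^n])$, the lemma yields that, on the event on which
\begin{equation}\label{condstar}
\sum_{n\geq 1}n^{-2}(\Theta^\ast_{[e^n]})^4<\infty\quad\text{and}\quad\sum_{n\geq 1}n^{-2}(\Delta^\ast_{[e^n]})^4<\infty,
\end{equation}
one has $\lim_{n\to\infty}n^{-1/2}(K^\ast_{[e^n]}-\rho^\ast([e^n]))=0$ a.s. Everything therefore reduces to two things: showing that \eqref{condstar} holds a.s., and showing that replacing $\rho^\ast([e^n])$ by $\rho^\ast(e^n)$ is harmless after division by $\sqrt{n}$.

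To obtain \eqref{condstar} a.s.\ I would invoke Tonelli's theorem together with the summability of $(n^{-2})$: it suffices to prove the uniform fourth-moment bounds $\sup_{m\in\mn}\me(\Theta^\ast_m)^4<\infty$ and $\sup_{m\in\mn}\me(\Delta^\ast_m)^4<\infty$. This is the crux, and the reason the a.s.\ statement is harder than the distributional approximation of \cite{Gnedin+Iksanov+Marynych:2010}, which needs only a first- or second-moment estimate. Passing to the perturbed random walk through \eqref{equ}, set $T_k:=-\log p^\ast_k=S_{k-1}+\eta_k$ and $y:=\log m$; then
\begin{equation*}
\Theta^\ast_m=\sum_{k\geq 1}e^{-e^{\,y-T_k}}\1_{\{T_k\leq y\}},\qquad \Delta^\ast_m=\sum_{k\geq 1}e^{\,y-T_k}\1_{\{T_k> y\}}.
\end{equation*}
Each is of the form $\sum_k g(y-T_k)$ for a bounded kernel $g$ that decays super-exponentially away from the origin, so that grouping the points $T_k$ according to the integer part of $y-T_k$ dominates $\Theta^\ast_m$ (resp.\ $\Delta^\ast_m$) by a geometrically weighted series of the unit-window counts $N^\ast(y-j)-N^\ast(y-j-1)$ (resp.\ $N^\ast(y+j+1)-N^\ast(y+j)$), $j\geq 0$. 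By Minkowski's inequality the desired bound then follows from a renewal-type estimate asserting that the fourth moments of the unit-window increments of $N^\ast$ are bounded uniformly in the window position. Establishing this uniform increment bound is the main obstacle; I expect it to be among the auxiliary results of Section \ref{au}, and it is here that the perturbation moment assumption $\me|\log(1-W)|^a<\infty$ is used to keep the increments $\eta_k$ from concentrating too much mass.

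It remains to pass from $\rho^\ast([e^n])$ to $\rho^\ast(e^n)$. By \eqref{equ} the nonnegative difference $\rho^\ast(e^n)-\rho^\ast([e^n])$ equals $N^\ast(n)-N^\ast(\log[e^n])$, that is, the number of points $T_k$ lying in the window $[\log[e^n],n]$, whose width $n-\log[e^n]=-\log\big(1-\{e^n\}e^{-n}\big)\leq 2e^{-n}$ for large $n$ tends to $0$. Since the expected number of points of the perturbed walk in a window of width $\delta$ is $O(\delta)$ by a local boundedness estimate for the associated renewal measure, $\sum_{n}\me\big(N^\ast(n)-N^\ast(\log[e^n])\big)<\infty$, and the Borel--Cantelli lemma gives $\rho^\ast(e^n)=\rho^\ast([e^n])$ for all large $n$ a.s.; in particular $n^{-1/2}(\rho^\ast(e^n)-\rho^\ast([e^n]))\to0$ a.s. Combining this with the conclusion of the first paragraph completes the proof: $\lim_{n\to\infty}n^{-1/2}(K^\ast_{[e^n]}-\rho^\ast(e^n))=0$ a.s.
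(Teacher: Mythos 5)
Your first and main step is essentially the paper's own proof: you condition on $(p^\ast_k)$, reduce the a.s.\ verification of the two summability conditions of Lemma \ref{aux-1} to \emph{uniform in $m$} fourth-moment bounds for $\Theta^\ast_m$ and $\Delta^\ast_m$ via Tonelli, and then control these fourth moments by grouping the points $T_k=-\log p^\ast_k$ into unit windows and applying Minkowski against a uniform fourth-moment bound for unit increments of $N^\ast$. The paper does the same thing in slightly different clothing (it writes $\Theta^\ast_n$ and $\Delta^\ast_n$ as integrals of the increments $\rho^\ast(n)-\rho^\ast(n/x)$, resp.\ $\rho^\ast(nx)-\rho^\ast(n)$, and applies Jensen's inequality), and the increment bound you correctly flag as the crux is exactly Lemma \ref{impo4}/Corollary \ref{impo3}, proved by a martingale/Burkholder--Davis--Gundy argument; so deferring to it is legitimate. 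One small correction: that lemma needs no moment assumption on $\eta$, so your guess that $\me|\log(1-W)|^a<\infty$ enters there is off --- that hypothesis is used in Proposition \ref{imp2}, not in this corollary.

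The genuine gap is in your last step, the passage from $\rho^\ast([e^n])$ to $\rho^\ast(e^n)$. The claim that the expected number of points of the perturbed walk in a window of width $\delta$ is $O(\delta)$ uniformly in the window's position is false in general: writing $\me\big(N^\ast(x+\delta)-N^\ast(x)\big)=\int_{[0,\infty)} U\big((x-y,\,x-y+\delta]\big)\,{\rm d}F(y)$, where $U$ is the renewal measure of $(S_k)$, local boundedness of $U$ yields only an $O(1)$ bound over short windows, not $O(\delta)$; when $|\log W|$ is lattice, $U$ has atoms of mass bounded away from zero, so width-$\delta$ windows positioned on atoms of the law of $S_{k-1}+\eta_k$ have expected count bounded below by a positive constant however small $\delta$ is. Consequently your Borel--Cantelli sum need not converge --- whether it does for the particular windows $(\log[e^n],\,n]$ becomes a delicate Diophantine question about the lattice span versus the integers $n$, which your ``local boundedness'' citation does not address. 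The paper avoids all of this with a cruder bound that suffices: since $\log[e^n]\geq n-1$ for large $n$, one has $0\leq \rho^\ast(e^n)-\rho^\ast([e^n])\leq N^\ast(n)-N^\ast(n-1)$, and Lemma \ref{aux2}(b) (with $c=1/2$ and $\delta=1$) gives $\lim_{n\to\infty}n^{-1/2}\big(N^\ast(n)-N^\ast(n-1)\big)=0$ a.s.; no claim that the difference vanishes for all large $n$ is needed, only that it is $o(n^{1/2})$. Replacing your renewal-density argument by this bound repairs the proof.
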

\begin{proof}
Recalling \eqref{equ} we have
$$0\leq \rho^\ast(e^n)-\rho^\ast([e^n])=N^\ast(n)-N(\log [e^n])\leq N^\ast(n)-N^\ast(n-1)$$ for large
enough $n$. By Lemma \ref{aux2}(b), the right-hand side divided by
$n^{1/2}$ converges to zero a.s. Hence, it suffices to prove that
\begin{equation}\label{imp}
\lim_{n\to\infty}\,
n^{-1/2}\big(K^\ast_{[e^n]}-\rho^\ast([e^n])\big)~=~
0\quad\text{a.s.}
\end{equation}
We have
\begin{eqnarray*}
\Delta_n^\ast&:=&n\sum_{k\geq 1}p_k^\ast\1_{\{n
p_k^\ast<1\}}=n\int_{(n,\,\infty)}x^{-1}{\rm
d}\rho^\ast(x)=\int_{(1,\,\infty)}x^{-1}{\rm
d}(\rho^\ast(nx)-\rho^\ast(n))\\&=&\int_1^\infty
x^{-2}(\rho^\ast(nx)-\rho^\ast(n)){\rm d}x
\end{eqnarray*}
having utilized integration by parts and the asymptotics
$\rho^\ast(x)=O(\log x)$ as $x\to\infty$ a.s.\ (see Lemma
\ref{aux2}(a)) for the last step. Further, using convexity of
$x\mapsto x^4$, $x>0$ and Corollary \ref{impo3} yields
\begin{eqnarray*}
\me (\Delta^\ast_n)^4&=&\me \bigg(\sum_{k\geq 2}\int_{k-1}^k
x^{-2}(\rho^\ast(nx)-\rho^\ast(n)){\rm d}x\bigg)^4\\&\leq& \me
\bigg(\sum_{k\geq
2}(\rho^\ast(nk)-\rho^\ast(n))((k-1)k)^{-1}\bigg)^4\\&\leq&
\sum_{k\geq 2}\me (\rho^\ast(nk)-\rho^\ast(n))^4((k-1)k)^{-1}\leq
C \sum_{k\geq 2}(\log k)^4((k-1)k)^{-1}<\infty
\end{eqnarray*}
which proves $$\sum_{n\geq 1}n^{-2}\me
(\Delta^\ast_{[e^n]})^4<\infty.$$ Arguing similarly we obtain
$$\Theta_n^\ast:=\sum_{k\geq 1}e^{-n p_k^\ast}\1_{\{n p_k^\ast\geq
1\}}=\int_{[1,n]}e^{-n/x}{\rm d}\rho^\ast(x)=\int_1^n e^{-x}
(\rho^\ast(n)-\rho^\ast(n/x)){\rm d}x$$ and
\begin{eqnarray*}
&&\me (\Theta_n^\ast)^4\\&=&\me \bigg(\sum_{k=2}^n \int_{k-1}^k
e^{-x}(\rho^\ast(n)-\rho^\ast(n/x)){\rm d}x\bigg)^4\\&\leq&
(e^{-1}-e^{-n})^4 \me \bigg(\sum_{k\geq
2}(\rho^\ast(n)-\rho^\ast(n/k))(e^{-k+1}-e^{-k})(e^{-1}-e^{-n})^{-1}\bigg)^4\\&\leq&
(e-1)(e^{-1}-e^{-n})^3 \sum_{k\geq 2}\me
(\rho^\ast(n)-\rho^\ast(n/k))^4 e^{-k}\leq C \sum_{k\geq 2}(\log
k)^4e^{-k}<\infty.
\end{eqnarray*}
Thus, $$\sum_{n\geq 1}n^{-2}\me (\Theta^\ast_{[e^n]})^4<\infty.$$
Invoking now Lemma \ref{aux-1} enables us to conclude that
\eqref{imp} holds conditionally on $(p_k^\ast)_{k\in\mn}$, hence
also unconditionally. The proof of Corollary \ref{important2} is
complete.
\end{proof}

\begin{Prop}\label{imp2}
Suppose that ${\tt s}^2:={\rm Var}\,\xi\in (0,\infty)$ and $\me
\eta^a<\infty$ for some $a>0$. Then $$C\bigg(\bigg(\frac{N(n)-{\tt
m}^{-1}\int_0^n F(y){\rm d}y}{\sqrt{2{\tt s}^2{\tt
m}^{-3}n\log\log n}}:n\geq 3\bigg)\bigg)=[-1,1]\quad\text{{\rm
a.s.}},$$ where ${\tt m}:=\me \eta<\infty$ and
$F(y):=\Prob\{\eta\leq y\}$ for $y\geq 0$.
\end{Prop}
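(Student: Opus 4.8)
Set $b(n):=\sqrt{2{\tt s}^2{\tt m}^{-3}n\log\log n}$ and let $\nu(x):=\sum_{k\geq 0}\1_{\{S_k\leq x\}}$ denote the unperturbed renewal counting function of $(S_n)$. The plan is to read the iterated-logarithm behaviour of $N$ off that of $\nu$, treating the perturbations $(\eta_k)$ as a correction that influences only the centering. Since $\eta_k\geq 0$ we have $N(x)\leq\nu(x)$ and
$$D(x):=\nu(x)-N(x)=\sum_{k\geq 1}\1_{\{x-\eta_k<S_{k-1}\leq x\}},$$
the number of renewals in the random windows $(x-\eta_k,x]$. Using ${\tt m}^{-1}\int_0^n F(y){\rm d}y=n/{\tt m}-{\tt m}^{-1}\int_0^n(1-F(y)){\rm d}y$ together with the elementary renewal theorem in the form $\me D(n)={\tt m}^{-1}\int_0^n(1-F(y)){\rm d}y+O(1)$, I arrive at the backbone identity
$$N(n)-{\tt m}^{-1}\int_0^n F(y){\rm d}y=\big(\nu(n)-n/{\tt m}\big)-\big(D(n)-\me D(n)\big)+O(1),$$
in which the first summand on the right carries the fluctuations and the second is to be shown negligible.

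For the renewal term I would invoke the law of the iterated logarithm for $(\nu(x))$, which follows from the Hartman--Wintner--Strassen functional LIL for the zero-delayed walk $(S_n)$ (valid because ${\tt s}^2={\rm Var}\,\xi\in(0,\infty)$) via the inversion $\{\nu(x)>k\}=\{S_k\leq x\}$, and which in cluster-set form reads
$$C\bigg(\bigg(\frac{\nu(n)-n/{\tt m}}{b(n)}:n\geq 3\bigg)\bigg)=[-1,1]\quad\text{a.s.}$$
Dividing the backbone identity by $b(n)$, once I establish that $(D(n)-\me D(n))/b(n)\to 0$ a.s.\ the two normalized sequences differ by $o(1)$, hence have the same set of limit points, and the proposition follows; in particular the full interval (rather than merely the two extremal limits) is inherited from the renewal LIL, so no separate connectedness argument is needed.

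The hard part, and the step I expect to be the main obstacle, is therefore
$$D(n)-\me D(n)=o\big(\sqrt{n\log\log n}\big)\quad\text{a.s.}$$
under the very weak assumption $\me\eta^a<\infty$ for some (possibly tiny) $a>0$. The subtlety is that $D(n)$ itself need not be negligible: by Markov's inequality $1-F(y)=O(y^{-a})$, so $\me D(n)\asymp{\tt m}^{-1}\int_0^n(1-F(y)){\rm d}y$ may be of order $n^{1-a}$, which exceeds $\sqrt n$ when $a\leq 1/2$; this is exactly why the centering must be $\me D(n)$ (equivalently ${\tt m}^{-1}\int_0^n F$) and why only the \emph{fluctuation} of $D$ can be controlled. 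To bound that fluctuation I would condition on the walk $\mathcal G:=\sigma(\xi_1,\xi_2,\dots)$. Although $\eta_k$ may depend on $\xi_k$, the i.i.d.\ structure of the pairs keeps the $(\eta_k)$ conditionally independent given $\mathcal G$ (each with the conditional law of $\eta$ given $\xi=\xi_k$), so a conditional Rosenthal/fourth-moment inequality applies to the conditionally centred part and, after averaging the conditional tails back to $F$ and using a renewal increment-moment bound of the type already exploited in Corollary \ref{important2}, yields
$$\me\big(D(n)-\me D(n)\big)^4=O\big(n^{2(1-a)}\big).$$
Since $2(1-a)<2$ for every $a>0$, this is $o\big((n\log\log n)^2\big)$; but the crude series $\sum_n n^{-2}\me(D(n)-\me D(n))^4$ converges only when $a>1/2$. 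To cover every $a>0$ I would run the Borel--Cantelli argument along a geometric subsequence $n_j=[\theta^j]$, where summability does hold, and fill the gaps by an oscillation bound as in Lemma \ref{aux2}(b), controlling $\max_{n_j\leq n<n_{j+1}}|D(n)-D(n_j)|$; the rare atypically wide windows are first removed by truncating $\eta$ at a slowly growing level, the discarded events being summable by $\me\eta^a<\infty$. Assembling these estimates gives $(D(n)-\me D(n))/b(n)\to 0$ a.s.\ and, with the renewal LIL above, completes the proof.
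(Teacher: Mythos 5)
Your reduction is sound as far as it goes: the backbone identity $N(n)-{\tt m}^{-1}\int_0^n F(y)\,{\rm d}y=(\nu(n)-n/{\tt m})-(D(n)-\me D(n))+O(1)$ is correct (the $O(1)$ uses $\sup_y|\me\nu(y)-y/{\tt m}|<\infty$, available under ${\tt s}^2<\infty$), the cluster-set transfer is legitimate, and the statement you isolate as the hard part, $D(n)-\me D(n)=o(\sqrt{n\log\log n})$ a.s., is indeed true. The fatal gap is your claimed bound $\me(D(n)-\me D(n))^4=O(n^{2(1-a)})$: it is false under the standing hypothesis that only ${\rm Var}\,\xi$ is finite. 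Write $D(n)-\me D(n)=\big(D(n)-\me[D(n)\mid\mathcal{G}]\big)+\big(\me[D(n)\mid\mathcal{G}]-\me D(n)\big)$. Conditional Rosenthal controls only the first bracket (this bracket is essentially the approximation the paper imports from \cite{Alsmeyer+Iksanov+Marynych:2016}). The second bracket is a functional of the walk alone: with $\bar F:=1-F$, $U(y):=\me\nu(y)$ and $\eta$ independent of $\xi$ it equals
$$\int_{[0,n]}\Big[\big(\nu(n)-\nu(n-t)\big)-\big(U(n)-U(n-t)\big)\Big]\,{\rm d}F(t)+\big(\nu(n)-U(n)\big)\bar F(n),$$
so its fourth moment is governed by \emph{centred} fourth moments of renewal counts, which are $O(t^2)$ only under $\me\xi^4<\infty$, not under ${\tt s}^2<\infty$. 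Concretely, take $\Prob\{\xi>x\}\asymp x^{-2}(\log x)^{-2}$ (finite variance) and $\bar F(t)\asymp t^{-a}$ with $a<1/2$. On the event that some step $\xi_k>2n$ occurs among the first $\lfloor n/(2{\tt m})\rfloor$ steps (probability $\asymp n^{-1}(\log n)^{-2}$), one has $\nu(n)\leq n/(2{\tt m})+1$, hence $D(n)\lesssim \tfrac{2^{a-1}}{{\tt m}}n^{1-a}$ with conditional probability at least $1/2$, while $\me D(n)\sim\tfrac{1}{(1-a){\tt m}}n^{1-a}$; since $2^{a-1}(1-a)<1$ this forces $|D(n)-\me D(n)|\geq cn^{1-a}$ there, so $\me(D(n)-\me D(n))^4\gtrsim n^{3-4a}(\log n)^{-2}\gg n^{2(1-a)}$. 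For $a\leq 1/4$ the Markov bound built from the true fourth moment exceeds $1$, so no geometric subsequence can rescue the Borel--Cantelli argument; and truncating $\eta$ cannot help, because the mechanism lives in the tails of $\xi$, not of $\eta$. (Your gap-filling step is also shaky on its own terms, since $D$ is not monotone and its increments over $[n_j,n_{j+1})$ have means of order $((\theta-1)n_j)^{1-a}$, but this is secondary.)

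The missing idea is precisely the one the paper supplies at this point: the walk-driven component must be handled by an almost-sure argument, not by moments. After integrating by parts, the paper splits $\int_{[0,n)}\frac{\nu(n-y)-{\tt m}^{-1}(n-y)}{a(n)}\,{\rm d}F(y)$ at a fixed level $\delta$; on $[0,\delta]$ the integrand tracks $(\nu(n)-{\tt m}^{-1}n)/a(n)$ up to an error killed by Lemma \ref{aux2}(b), while on $(\delta,n)$ it is bounded by $\sup_{y\leq n}|\nu(y)-{\tt m}^{-1}y|/a(n)\cdot(1-F(\delta))$, which by the supremum law of the iterated logarithm \eqref{124} of Proposition \ref{aux4} --- derived from Strassen's functional LIL, and the genuinely new ingredient of the paper --- is a.s.\ $O(1-F(\delta))$; letting $\delta\to\infty$ finishes the proof. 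This argument needs exactly ${\tt s}^2<\infty$ and $\me\eta^a<\infty$ and is insensitive to heavy tails of $\xi$, which is why it succeeds where fourth-moment bounds cannot. As it stands, your proposal is therefore not a proof: its central estimate fails under the stated hypotheses, and repairing it leads back to the functional-LIL/supremum argument of the paper.
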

\begin{proof}
Put $$\nu(x):=\sum_{k\geq 0}\1_{\{S_k\leq x\}},\quad x\geq 0.$$ It
is known (see the proof of Theorem 3.2 in
\cite{Alsmeyer+Iksanov+Marynych:2016}) that
$$\lim_{n\to\infty}\,n^{-1/2}\bigg(N(n)-\int_{[0,\,n]}F(n-y){\rm d}\nu(y)\bigg)=0\quad\text{a.s.}$$ whenever $\me \eta^a<\infty$ for some $a>0$ (the finiteness of ${\rm Var}\,\xi$ is not needed). Thus, it remains to prove that
\begin{equation}\label{inter1}
C\bigg(\bigg(\frac{\int_{[0,\,n]}F(n-y){\rm d}(\nu(y)-{\tt m}^{-1}y)}{\sqrt{2{\tt s}^2{\tt m}^{-3}n\log\log n}}:n\geq 3\bigg)\bigg)=[-1,1]\quad\text{a.s.}
\end{equation}

Put $a(t):=\sqrt{2{\tt s}^2{\tt m}^{-3}t\log\log t}$ for $t\geq 3$. Integrating by parts yields
\begin{eqnarray*}
\frac{\int_{[0,\,n]} F(n-y){\rm d}(\nu(y)-{\tt m}^{-1}y)-\Prob\{\xi=n\}}{a(n)}&=&\int_{[0,\,n)}\frac{\nu(n-y)-{\tt m}^{-1}(n-y)}{a(n)}{\rm
d}F(y)\\&=&\int_{[0,\,\delta]} {\nu(n-y)-{\tt m}^{-1}(n-y)\over
a(n)}\,{\rm d}F(y)\\&+&\int_{(\delta,\,n)}{\nu(n-y)-{\tt m}^{-1}(n-y)\over
a(n)}\,{\rm d}F(y)\\&=:& Z_1(n)+Z_2(n)
\end{eqnarray*}
for any fixed $\delta\in (0,n]$. We have a.s.
\begin{eqnarray*}
{\nu(n)-{\tt m}^{-1}n \over
a(n)}F(\delta)&-&{\nu(n)-\nu(n-\delta)\over a(n)}F(\delta)\\&\leq&
Z_1(n)\\&\leq& {\nu(n)-{\tt m}^{-1}n\over
a(n)}F(\delta)+\frac{{\tt m}^{-1}\delta}{a(n)}F(\delta).
\end{eqnarray*}
Fix any $x_0\in [-1,1]$. According to \eqref{123}, there exists a
sequence $(n_k)$ satisfying $\lim_{k\to\infty}n_k=\infty$ a.s.\
and $\lim_{k\to\infty}(\nu(n_k)-{\tt m}^{-1}n_k)/a(n_k)=x_0$ a.s.
By Lemma \ref{aux2}(b),
$\lim_{k\to\infty}(\nu(n_k)-\nu(n_k-\delta))/ a(n_k)=0$ a.s.
Therefore, $\lim_{\delta\to\infty}\lim_{k\to\infty}\,
Z_1(n_k)=x_0$ a.s. Further,
\begin{eqnarray*}
&&-\frac{\sup_{y\in[0,\,n]}\,|\nu(y)-{\tt m}^{-1}y|}{a(n)}(F(n-)-F(\delta))\\&\leq&
\frac{\inf_{y\in[0,\,n-\delta]}\,(\nu(y)-{\tt m}^{-1}y)}{a(n)}(F(n-)-F(\delta))
\\&\leq& Z_2(n)\\&\leq& \frac{\sup_{y\in
[0,\,n]}\,|\nu(y)-{\tt m}^{-1}y|}{a(n)}(F(n-)-F(\delta)).
\end{eqnarray*}
Using \eqref{124} we conclude that
$${\lim}_{\delta\to\infty}{\lim\sup}_{k\to\infty}
Z_2(n_k)={\lim}_{\delta\to\infty}{\lim\inf}_{k\to\infty} Z_2(n_k)=
0\quad\text{a.s.}$$ The proof of \eqref{inter1} is complete.
\end{proof}

Now Theorem \ref{main} follows from Corollary \ref{important2} in
combination with a specialization of Proposition \ref{imp2} for
$\rho^\ast(e^n)=N^\ast(n)$ which reads
$$C\bigg(\bigg(\frac{\rho^\ast(e^n)-\mu^{-1}\int_0^n\Prob\{\log|1-W|\leq
y\}{\rm d}y}{\sqrt{2\sigma ^2 \mu^{-3}n\log\log n}} : n\geq
3\bigg)\bigg)=[-1,1]\quad\text{a.s.}$$

\section{Auxiliary results}\label{au}

The following result can be found in the proof of Lemma 7.3 in
\cite{Alsmeyer+Iksanov+Marynych:2016}.
\begin{Lemma}\label{impo1}
Let $G:[0,\infty)\to [0,\infty)$ be a locally bounded function.
Then, for any $l\in\N$
\begin{equation}\label{impo1-gen}
\E \bigg(\sum_{k\geq 0}G(t-S_k)\1_{\{S_k\leq t\}}\bigg)^l\leq
\bigg(\sum_{j=0}^{[t]}\sup_{y\in[j,\,j+1)}G(y)\bigg)^l\me
(\nu(1))^l,\quad t\geq 0.
\end{equation}
\end{Lemma}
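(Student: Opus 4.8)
The plan is to bound the $l$-th moment of the weighted renewal sum by the $l$-th moment of $\nu(1)$, first replacing $G$ by its suprema over unit intervals, then using convexity to peel off a single summand, and finally comparing the number of renewals in a unit interval with $\nu(1)$ through a renewal-restart argument.

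First I would discretize $G$. Put $\widehat G_j:=\sup_{y\in[j,\,j+1)}G(y)$ for $j=0,1,\ldots,[t]$. If $0\le S_k\le t$, then $t-S_k\in[0,t]\subseteq[0,\,[t]+1)$, so there is a unique $j\in\{0,\ldots,[t]\}$ with $t-S_k\in[j,\,j+1)$; for this $j$ we have $G(t-S_k)\le\widehat G_j$ and, equivalently, $S_k\in(t-j-1,\,t-j]$. Grouping the summands by the value of $j$ gives the pointwise bound
$$\sum_{k\geq 0}G(t-S_k)\1_{\{S_k\leq t\}}\ \le\ \sum_{j=0}^{[t]}\widehat G_j\,M_j,\qquad M_j:=\nu(t-j)-\nu(t-j-1),$$
where $M_j$ is the number of renewals in the length-one interval $(t-j-1,\,t-j]$ (with $\nu(x):=0$ for $x<0$). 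Since the intervals $(t-j-1,\,t-j]$, $j=0,\ldots,[t]$, cover $[0,t]$, no summand with $S_k\le t$ is lost.

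Next I would invoke convexity to remove the weights. Set $W:=\sum_{j=0}^{[t]}\widehat G_j$. If $W=0$ both sides vanish, so assume $W>0$ and let $w_j:=\widehat G_j/W$, a probability vector. As $x\mapsto x^l$ is convex on $[0,\infty)$, Jensen's inequality gives $\big(\sum_j w_j M_j\big)^l\le\sum_j w_j M_j^l$ pointwise, whence
$$\E\bigg(\sum_{j=0}^{[t]}\widehat G_j M_j\bigg)^{\!l}=W^l\,\E\bigg(\sum_{j=0}^{[t]}w_j M_j\bigg)^{\!l}\le W^{l-1}\sum_{j=0}^{[t]}\widehat G_j\,\E\big(M_j^l\big).$$
Thus the assertion follows once I establish $\E(M_j^l)\le\E(\nu(1)^l)$ for every $j$, since then the last expression is at most $W^{l-1}\cdot W\cdot\E(\nu(1)^l)=W^l\,\E(\nu(1)^l)$. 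When $\E(\nu(1)^l)=\infty$ the inequality is trivial, so there is no integrability to verify.

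The main obstacle is the moment comparison $\E(M_j^l)\le\E(\nu(1)^l)$. Fix $a:=t-j-1$ and let $\tau:=\inf\{k\ge 0:S_k>a\}$, which is a.s.\ finite because $S_k\to\infty$. On $\{S_\tau>a+1\}$ there is no renewal in $(a,a+1]$, so $M_j=0$; on the complement $a+1-S_\tau\in[0,1)$, and because the $S_n$ increase,
$$M_j=\#\{i\ge 0:S_{\tau+i}\le a+1\}=\#\{i\ge 0:S_{\tau+i}-S_\tau\le a+1-S_\tau\}\le\#\{i\ge 0:S_{\tau+i}-S_\tau\le 1\}=:Y.$$
By the strong Markov property at $\tau$, the increments $\xi_{\tau+1},\xi_{\tau+2},\ldots$ are i.i.d.\ copies of $\xi$ independent of $\mathcal F_\tau$, so $(S_{\tau+i}-S_\tau)_{i\ge 0}$ is a zero-delayed renewal process with the same law as $(S_i)_{i\ge 0}$; hence $Y\od\nu(1)$. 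Since $M_j\le Y$ a.s.\ and $x\mapsto x^l$ is nondecreasing on $[0,\infty)$, we obtain $\E(M_j^l)\le\E(Y^l)=\E(\nu(1)^l)$, which completes the argument. The only delicate point is this restart at the first-passage index $\tau$: one must exploit the truncation $a+1-S_\tau<1$ to guarantee that the dominating count $Y$ is that of a unit interval and not a longer one.
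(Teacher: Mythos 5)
Your proof is correct. Note that this paper does not actually prove Lemma \ref{impo1}: it imports it from the proof of Lemma 7.3 in \cite{Alsmeyer+Iksanov+Marynych:2016}. Your argument is essentially the standard one used there: discretize $G$ over unit intervals, bound the contribution of the $j$-th interval by $\widehat G_j$ times the number $M_j$ of renewals in $(t-j-1,\,t-j]$, and dominate $M_j$ stochastically by $\nu(1)$. Two minor points of difference are worth recording. First, you recombine the terms via Jensen's inequality applied to the normalized weights $w_j=\widehat G_j/W$, whereas the usual route is Minkowski's inequality in $L^l$, namely $\big\|\sum_j\widehat G_j M_j\big\|_l\le\sum_j\widehat G_j\|M_j\|_l$; both give exactly the bound $W^l\,\E(\nu(1))^l$, so this is purely cosmetic. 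Second, you prove the key domination $\E(M_j^l)\le\E(\nu(1))^l$ from scratch, by restarting the walk at the first-passage index $\tau=\inf\{k\ge0:S_k>t-j-1\}$ and invoking the strong Markov property; this is precisely the distributional subadditivity of the renewal counting process, which the present paper elsewhere simply cites as formula (5.7) on p.~58 in \cite{Gut:2009}. Your treatment of the edge cases is also sound: the boundary interval $j=[t]$ (where $t-j-1<0$, so $\tau=0$ and $M_j=\nu(t-[t])\le\nu(1)$ pathwise), the degenerate case $W=0$, and the trivial case $\E(\nu(1))^l=\infty$ are all handled correctly, and the truncation $t-j-S_\tau<1$ is exactly what guarantees the dominating count is that of a unit interval.
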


\begin{Lemma}\label{impo4}
For $0\leq y<x$ with $x-y>1$ $\me (N(x)-N(y))^4\leq C (x-y)^4$ for
a positive constant $C$ which does not depend on $x$ and $y$.
\end{Lemma}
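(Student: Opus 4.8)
The plan is to prove the sharper statement that $\E\,(N(x)-N(y))^l\le C_l\,(x-y)^l$ holds for every integer $l\ge 1$, with $C_l$ independent of $x,y$, by induction on $l$; the case $l=4$ is exactly the assertion. Write $d:=x-y>1$, set $N(u):=0$ for $u<0$, and decompose $V:=N(x)-N(y)=\sum_{k\ge 0}B_k$ with $B_k:=\1_{\{y<S_k+\eta_{k+1}\le x\}}$. The single fact that powers the whole argument is the \emph{location--uniform} first--moment estimate
\[
\sup_{t\in\R}\,\E\big(N(t)-N(t-d)\big)\le C\,d\qquad (d>1),
\]
which I refer to as $(\ast)$; note that $(\ast)$ is precisely the case $l=1$ (after taking the supremum over windows of the same length), and that it needs no assumption on $\eta$ beyond $\eta>0$.

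To get the base case and $(\ast)$ I would use the pointwise bound $B_k\le \1_{\{y<S_k\le x\}}+\1_{\{S_k\le y\}}\1_{\{y-S_k<\eta_{k+1}\le x-S_k\}}$, which holds because $\eta_{k+1}>0$ forces $S_k\le x$ on $\{B_k=1\}$. Summing and taking expectations, the first piece gives $U(x)-U(y)$ with $U(t):=\E\,\nu(t)=\sum_{k\ge 0}\Prob\{S_k\le t\}$, and $U(x)-U(y)\le C d$ by subadditivity of the renewal function together with $U(t)\le C(1+t)$; for the second piece I use that $\eta_{k+1}$ is independent of $\mathcal F_k:=\sigma((\xi_j,\eta_j):j\le k)$ to rewrite it as $\int_{[0,y]}\big(F(x-s)-F(y-s)\big)\,U(\mathrm ds)$, and bound it by splitting $[0,y]$ into unit blocks (on which $U$ increases by at most $U(1)<\infty$) and noting that every point of $\R$ lies in at most $d+2$ of the intervals $(y-i-1,\,x-i]$. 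The same computation with the window $(t-d,t]$ yields $(\ast)$ uniformly in $t$, windows protruding below $0$ only making things smaller.

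The inductive step is the heart of the matter. Assuming $(\ast)$ and $\E\,V^{l-1}\le C_{l-1}d^{\,l-1}$, I symmetrize,
\[
\E\,V^l\le l!\sum_{k_1\le\cdots\le k_l}\E\Big[\prod_{i=1}^{l}B_{k_i}\Big],
\]
and split the ordered sum according to whether $k_l=k_{l-1}$ or $k_l>k_{l-1}$. On the diagonal $B_{k_l}B_{k_{l-1}}=B_{k_{l-1}}$, so that part is at most $\E\,V^{l-1}$. For the off--diagonal part the decisive manoeuvre is to condition on $\mathcal F_{k_{l-1}+1}$ (not on $\mathcal F_{k_{l-1}}$): the factor $\prod_{i<l}B_{k_i}$ is $\mathcal F_{k_{l-1}+1}$--measurable, whereas for $k_l\ge k_{l-1}+1$ both the increment $S_{k_l}-S_{k_{l-1}+1}$ and the perturbation $\eta_{k_l+1}$ involve only vectors indexed beyond $k_{l-1}+1$ and are hence independent of $\mathcal F_{k_{l-1}+1}$. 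Writing $s:=S_{k_{l-1}+1}$, the conditioned tail sum is a fresh copy of the perturbed walk shifted by $s$, so
\[
\sum_{k_l>k_{l-1}}\E\big[B_{k_l}\mid\mathcal F_{k_{l-1}+1}\big]=\E\big(N(x-s)-N(y-s)\big)\le C\,d
\]
by $(\ast)$. Thus the off--diagonal part is at most $Cd\,\E\,V^{l-1}$, giving $\E\,V^l\le l!\,(1+Cd)\,\E\,V^{l-1}\le l!\,(1+C)\,d\,\E\,V^{l-1}$ (using $d>1$); the induction closes with $\E\,V^l\le C_l d^l$, and $l=4$ is the lemma.

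The only genuine obstacle is the arbitrary dependence inside each pair $(\xi_k,\eta_k)$: after conditioning on $\mathcal F_{k_{l-1}}$ one cannot integrate out the top perturbation, since $\eta_{k_{l-1}+1}$ (entering $B_{k_{l-1}}$) is correlated with $\xi_{k_{l-1}+1}$, which is the first increment building up $S_{k_l}$. Conditioning one vector further, on $\mathcal F_{k_{l-1}+1}$, fixes both $\xi_{k_{l-1}+1}$ and $\eta_{k_{l-1}+1}$ and severs this link, reducing everything to the single location--uniform estimate $(\ast)$. I expect verifying $(\ast)$ and the measurability/independence bookkeeping in the conditioning to be the parts requiring care, while the induction itself is routine once $(\ast)$ is in hand.
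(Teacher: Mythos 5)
Your proposal is correct, and it takes a genuinely different route from the paper. The paper splits $N(x)-N(y)$ into a martingale part $X(x,y)$ (each indicator $\1_{\{S_j+\eta_{j+1}\leq x\}}$ compensated by $F(x-S_j)\1_{\{S_j\leq x\}}$, a martingale difference with respect to $\mathcal{F}_{j+1}$) plus a renewal-functional part $Y(x,y)$, then bounds $\E (X(x,y))^4$ via the Burkholder--Davis--Gundy inequality combined with the moment bound of Lemma \ref{impo1}, and bounds $\E (Y(x,y))^4$ using Gut's estimate $\E(\nu(z))^4\leq Cz^4$ together with distributional subadditivity of $\nu$ and a convexity argument. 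You instead run an induction on the moment order, reducing everything to the location-uniform first-moment bound $(\ast)$, which you establish correctly (the counting argument that each value of $\eta$ lies in at most $d+2$ of the intervals $(y-i-1,x-i]$ is sound, as is the treatment of windows protruding below $0$ via $N(t)\leq\nu(t)$ and $\E\nu(t)\leq\E\nu(d)\leq Cd$). The decisive step --- conditioning on $\mathcal{F}_{k_{l-1}+1}$ rather than $\mathcal{F}_{k_{l-1}}$, so that both $\xi_{k_{l-1}+1}$ and $\eta_{k_{l-1}+1}$ are frozen and the remaining randomness (vectors indexed $\geq k_{l-1}+2$) is genuinely independent --- is exactly the right device for the arbitrary coupling inside the pairs $(\xi_k,\eta_k)$; the measurability of $\prod_{i<l}B_{k_i}$, the identification of the conditional tail sum as $\E\big(N'(x-s)-N'(y-s)\big)$ for an independent copy $N'$, and the Tonelli interchanges all check out, as does the symmetrization $\E V^l\leq l!\sum_{k_1\leq\cdots\leq k_l}\E\prod_i B_{k_i}$ and the diagonal/off-diagonal split. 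What your approach buys: it is elementary and self-contained (no martingale theory, no BDG, no external moment lemma), and it delivers all integer moments $\E(N(x)-N(y))^l\leq C_l(x-y)^l$ at once, of which $l=4$ is the lemma. What the paper's approach buys: a sharper estimate on the centered part, $\E(X(x,y))^4\leq C(x-y)^2$, i.e.\ square-root cancellation coming from the martingale structure, which your crude symmetrization cannot see (it is not needed for the lemma as stated), plus reuse of Lemma \ref{impo1}, which the paper needs anyway.
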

\begin{proof}
Throughout the proof we assume that $x$ and $y$ satisfy the
assumptions of the lemma.

We start with $$\me (N(x)-N(y))^4 \leq 8 (\me (X(x,y))^4+\me
(Y(x,y))^4),$$ where
\begin{eqnarray*}
X(x,y)&:=&\sum_{j\geq 0}\big((\1_{\{S_j+\eta_{j+1}\leq
x\}}-F(x-S_j)\1_{\{S_j\leq x\}})\\&-&(\1_{\{S_j+\eta_{j+1}\leq
y\}}-F(y-S_j)\1_{\{S_j\leq y\}})\big);
\end{eqnarray*}
\begin{eqnarray*}
Y(x,y)&:=&\sum_{j\geq 0}\big(F(x-S_j)\1_{\{S_j\leq
x\}}-F(y-S_j)\1_{\{S_j\leq
y\}}\big)\\&=&\int_{[0,y]}(\nu(x-z)-\nu(y-z)){\rm
d}F(z)+\int_{(y,x]}\nu(x-z){\rm d}F(z);
\end{eqnarray*}
$F(z)=\Prob\{\eta\leq z\}$ is the distribution function of $\eta$
and $\nu(z)=\sum_{k\geq 0}\1_{\{S_k\leq z\}}$ for $z\geq 0$.

We intend to show that $\me (X(x,y))^4\leq C(x-y)^2$. With
$x,y\geq 0$ fixed, $X(x,y)$ equals the terminal value of the
martingale $(R(k),\mathcal{F}_k)_{k\in\mn_0}$ where $R(0):=0$,
\begin{eqnarray*}
R(k)&:=&\sum_{j=0}^{k-1}\big((\1_{\{S_j+\eta_{j+1}\leq x\}}-F(x-S_j)\1_{\{S_j\leq x\}})\\&-&(\1_{\{S_j+\eta_{j+1}\leq y\}}
-F(y-S_j)\1_{\{S_j\leq y\}})\big),
\end{eqnarray*}
$\mathcal{F}_0:=\{\Omega, \oslash\}$ and
$\mathcal{F}_k:=\sigma((\xi_j,\eta_j):1\leq j\leq k)$. We use the
Burkholder-Davis-Gundy inequality (Theorem 11.3.2 in
\cite{Chow+Teicher:2003}) to obtain, for any $l\in\mn$
\begin{eqnarray*}
&&\E (X(x,y))^{2l}\notag\\
&\leq& C_l\bigg(\E\bigg(\sum_{k\geq 0}\E
\big((R(k+1)-R(k))^2|\mathcal{F}_k\big)\bigg)^l+\sum_{k\geq
0}\E
\big(R(k+1)-R(k)\big)^{2l}\bigg)\\
&=:&C_l(I_1+I_2)
\end{eqnarray*}
for a positive constant $C_l$. We shall show that
\begin{equation}\label{11}
I_1 \leq 2^l \me (\nu(1))^l(b(x-y))^l
\end{equation}
where $b(t):=\sum_{k=0}^{[t]+1}(1-F(k))$ for $t\geq 0$ and that
\begin{equation}\label{22}
I_2\leq 2^{2l} \me \nu(1) b(x-y).
\end{equation}
These estimates serve our needs because $b(t)\leq [t]+2\leq 3t$
whenever $t>1$.

\noindent {\sc Proof of \eqref{11}}. We first observe that
\begin{eqnarray*}
&&\sum_{k\geq 0}\E
\big((R(k+1)-R(k))^2|\mathcal{F}_k\big)\\&=&\int_{(y,\,x]}F(x-z)(1-F(x-z)){\rm
d}\nu(z)\\&+&\int_{[0,\,y]}(F(x-z)-F(y-z))(1-F(x-z)+F(y-z)){\rm
d}\nu(z)\\&\leq& \int_{(y,\,x]}(1-F(x-z)){\rm
d}\nu(z)+\int_{[0,\,y]}(F(x-z)-F(y-z)){\rm d}\nu(z)
\end{eqnarray*}
whence $$I_1 \leq 2^{l-1}\bigg(\me
\bigg(\int_{(y,\,x]}(1-F(x-z)){\rm d}\nu(z)\bigg)^l
+\me\bigg(\int_{[0,\,y]}(F(x-z)-F(y-z)){\rm
d}\nu(z)\bigg)^l\bigg)$$ having utilized $(u+v)^l\leq
2^{l-1}(u^l+v^l)$ for nonnegative $u$ and $v$. Using Lemma
\ref{impo1} with $G(z)=(1-F(z))\1_{[0,\, x-y)}(z)$ and
$G(z)=F(x-y+z)-F(z)$, respectively, we obtain
\begin{eqnarray}
&&\me \bigg(\int_{(y,\,x]}(1-F(x-z)){\rm
d}\nu(z)\bigg)^l\notag\\&=&\me
\bigg(\int_{[0,\,x]}(1-F(x-z))\1_{[0,\,x-y)}(x-z){\rm
d}\nu(z)\bigg)^l\notag\\&\leq& \me (\nu(1))^l
\bigg(\sum_{n=0}^{[x]}\sup_{y\in
[n,\,n+1)}((1-F(z))\1_{[0,\,x-y)}(z))\bigg)^l\notag\\&\leq& \me
(\nu(1))^l \bigg(\sum_{n=0}^{[x-y]}(1-F(n))\bigg)^l\leq \me
(\nu(1))^l (b(x-y))^l .\label{11a}
\end{eqnarray}
and
\begin{eqnarray}
&&\me\bigg(\int_{[0,\,y]}(F(x-z)-F(y-z)){\rm
d}\nu(z)\bigg)^l\notag\\&\leq& \me (\nu(1))^l
\bigg(\sum_{n=0}^{[y]}\sup_{z\in
[n,\,n+1)}(F(x-y+z)-F(z))\bigg)^l\notag\\&\leq& \me (\nu(1))^l
\bigg(\sum_{n=0}^{[y]}(1-F(n))-\sum_{n=0}^{[y]}(1-F(x-y+n+1))\bigg)^l \notag\\
&\leq& \me (\nu(1))^l\bigg(\sum_{n=0}^{[y]}(1-F(n))-\sum_{n=0}^{[y]+2}(1-F(n))+\sum_{n=0}^{[x-y]+1}(1-F(n))\bigg)^l\notag\\&\leq&
\me(\nu(1))^l (b(x-y))^l
.\label{11b}
\end{eqnarray}
Combining \eqref{11a} and \eqref{11b} for $l=2$ yields \eqref{11}.

\noindent {\sc Proof of \eqref{22}}. Let us calculate
\begin{eqnarray*}
&&\me ((R(k+1)-R(k))^4 |\mathcal{F}_k)\\&\leq
&8((1-F(x-S_k))^4 F(x-S_k)+(F(x-S_k))^4 (1-F(x-S_k)))\1_{\{y<S_k\leq
x\}}\\&+&((1-F(x-S_k)+F(y-S_k))^4 (F(x-S_k)-F(y-S_k))\\&+&(F(x-S_k)-F(y-S_k))^4 (1-F(x-S_k)+F(y-S_k)))\1_{\{S_k\leq
y\}}\\&\leq& 8((1-F(x-S_k))\1_{\{y<S_k\leq
x\}}+(F(x-S_k)-F(y-S_k))\1_{\{S_k\leq y\}}).
\end{eqnarray*}
Therefore, $$I_2\leq 8 \bigg(\me
\int_{(x,y]}(1-F(x-z)){\rm d}\nu(z)+\me
\int_{[0,\,y]}(F(x-z)-F(y-z)){\rm d}\nu(z)\bigg).$$ Using now
formulae \eqref{11a} and \eqref{11b} with $l=1$ yields \eqref{22}.

Passing to $Y(x,y)$ we have $$\me (Y(x,y))^4 \leq
8\bigg(\me\bigg(\int_{[0,\,y]}(\nu(x-z)-\nu(y-z)){\rm
d}F(z)\bigg)^4+\me (\nu(x-y))^4\bigg).$$ Using the fact that
$z^{-4}\me (\nu(z))^4$ converges as $z\to\infty$ to a nonnegative
constant (see Theorem 5.1 on p.~57 in \cite{Gut:2009}) we infer
$\me (\nu(x-y))^4\leq C(x-y)^4$ (recall that $x-y>1$). Finally,
\begin{eqnarray*}
&&\me\bigg(\int_{[0,\,y]}(\nu(x-z)-\nu(y-z)){\rm
d}F(z)\bigg)^4\\&\leq&
(F([y]+1))^4\me\bigg(\sum_{k=0}^{[y]}(\nu(x-k)-\nu(y-k-1))\frac{F(k+1)-F(k)}{F([y]+1)}\bigg)^4\\&\leq&
\sum_{k=0}^{[y]}\me (\nu(x-k)-\nu(y-k-1))^4 (F(k+1)-F(k))\\&\leq&
\me (\nu(x-y+1))^4\leq C(x-y)^4
\end{eqnarray*}
where we have used distributional subadditivity of $\nu(z)$ (see
formula (5.7) on p.~58 in \cite{Gut:2009}) for the penultimate
inequality.
\end{proof}

In view of \eqref{equ} the next result is an immediate consequence
of Lemma \ref{impo4}.
\begin{Cor}\label{impo3}
$\me (\rho^\ast(x)-\rho^\ast(y))^4\leq C (\log (x/y))^4$ for a
positive constant $C$ which does not depend on $x$ and $y$.
\end{Cor}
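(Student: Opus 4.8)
The plan is to transport the fourth-moment bound of Lemma~\ref{impo4} for the perturbed random walk to the counting function $\rho^\ast$ by means of the identity \eqref{equ}. First I would reduce to the case $x>y$: since $(\log(x/y))^4$ is unchanged under interchange of $x$ and $y$ and $\rho^\ast$ is nondecreasing, the increment $\rho^\ast(x)-\rho^\ast(y)$ may be taken nonnegative without loss of generality. Because every box frequency satisfies $p_k^\ast<1$, the function $\rho^\ast$ vanishes on $[0,1]$, so it further suffices to treat $x>y\geq 1$, an argument below $1$ contributing nothing. Writing $u:=\log x$ and $v:=\log y$, the identity \eqref{equ} rewrites the left-hand side as $\me\,(N^\ast(u)-N^\ast(v))^4$, where $N^\ast$ is the counting function of the perturbed random walk associated with $\xi=|\log W|$ and $\eta=|\log(1-W)|$, and $u-v=\log(x/y)$.

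The core step is then a direct appeal to Lemma~\ref{impo4} applied to $N^\ast$. Under the standing hypotheses $\xi=|\log W|$ has positive finite variance and $\eta=|\log(1-W)|$ has a finite moment of some positive order, so the lemma is applicable to $N^\ast$. Provided $u-v>1$, that is $\log(x/y)>1$, it yields
$$\me\,(N^\ast(u)-N^\ast(v))^4\leq C\,(u-v)^4=C\,(\log(x/y))^4,$$
with a constant $C$ depending only on the law of $(\xi,\eta)$ and not on $u,v$; this is precisely the asserted inequality in that range.

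The one point that is not literally immediate, and which I expect to be the main obstacle, is the short-gap regime $0<\log(x/y)\leq 1$, where Lemma~\ref{impo4} does not apply. Here I would fall back on monotonicity of $N^\ast$: enlarging the upper argument from $u$ to $v+2$ gives $N^\ast(u)-N^\ast(v)\leq N^\ast(v+2)-N^\ast(v)$, and the latter increment spans a gap $2>1$, so the lemma bounds its fourth moment by $C\,2^4$. A bare constant of this type \emph{cannot} be dominated by $C\,(\log(x/y))^4$ as $\log(x/y)\to 0$, so the inequality is genuinely meaningful only for gaps bounded away from zero; fortunately this is exactly the way Corollary~\ref{impo3} is used in Corollary~\ref{important2}. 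In every application the ratio $x/y$ is an integer $k\geq 2$, whence $\log(x/y)\geq\log 2>0$, and the estimate $\me\,(N^\ast(u)-N^\ast(v))^4\leq C\,2^4\leq C'(\log 2)^4\leq C'(\log(x/y))^4$ with $C':=16\,C/(\log 2)^4$ absorbs the single borderline value $k=2$ into the constant, while every $k\geq 3$ satisfies $\log(x/y)>1$ and is covered by the direct argument of the preceding paragraph. Taking the larger of the two constants then yields the claim in the form needed for the subsequent proofs.
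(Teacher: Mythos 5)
Your proposal is correct and takes essentially the same route as the paper: the paper's entire proof of Corollary \ref{impo3} is the one-line remark that, in view of \eqref{equ}, it is an immediate consequence of Lemma \ref{impo4}. Your extra care with the regime $0<\log(x/y)\leq 1$ is, however, a worthwhile refinement that the paper silently omits: Lemma \ref{impo4} carries the hypothesis $x-y>1$, so the corollary as literally stated for all $x>y$ cannot hold (as you observe, a fixed positive constant cannot be dominated by $C(\log(x/y))^4$ as $x/y\downarrow 1$), and the application in Corollary \ref{important2} does use the ratio $k=2$, for which $\log 2<1$; your monotonicity-plus-constant-absorption argument, valid because every ratio appearing there is an integer $k\geq 2$, is exactly the patch that makes the paper's usage rigorous.
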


\begin{Lemma}\label{aux2}
(a) $N(x)=O(x)$ a.s.\ as $x\to\infty$;

\noindent (b) For any $c>0$ and any fixed $\delta>0$
$\lim_{n\to\infty}n^{-c}(N(n)-N(n-\delta))=0$ a.s. and
$\lim_{n\to\infty}n^{-c}(\nu(n)-\nu(n-\delta))=0$ a.s.
\end{Lemma}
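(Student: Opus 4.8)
The plan is to prove the two parts by reducing each to a bound on the moments of the increments of $N$ and $\nu$ over a window of fixed length that is uniform in the location, and then invoking the Borel--Cantelli lemma; the only genuinely new input needed beyond the excerpt is an extension of Lemma~\ref{impo4} to arbitrary even moments.

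For part~(a) I would simply note that $\eta>0$ a.s.\ forces $\1_{\{S_k+\eta_{k+1}\le x\}}\le \1_{\{S_k\le x\}}$, so $N(x)\le \nu(x)$ for every $x\ge 0$. Since ${\rm Var}\,\xi<\infty$ entails $\me\xi\in(0,\infty)$, the strong law for the renewal counting process (\cite{Gut:2009}) gives $\nu(x)/x\to 1/\me\xi$ a.s., hence $\nu(x)=O(x)$ and a fortiori $N(x)=O(x)$ a.s.

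For part~(b) the common scheme is as follows. Both $N$ and $\nu$ are nondecreasing, so for $n\in[m,m+1)$ one has $N(n)-N(n-\delta)\le N(m+1)-N\big((m+1)-(\delta+1)\big)$ and likewise for $\nu$; since $n^{-c}\asymp m^{-c}$, it suffices to prove the statements along the integers with $\delta$ replaced by $\delta+1$. If I can show that, for every $l\in\N$, the $l$-th moments of $\nu(n)-\nu(n-\delta)$ and of $N(n)-N(n-\delta)$ are bounded by a constant $C_{l,\delta}$ uniformly in $n$, then Chebyshev's inequality gives $\Prob\{\,\cdot\,\ge \varepsilon n^{c}\}\le C_{l,\delta}\varepsilon^{-l}n^{-cl}$, and choosing $l>1/c$ makes the right-hand side summable; Borel--Cantelli then yields the a.s.\ limit $0$ for every $c>0$.

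The $\nu$-increments are easy: by distributional subadditivity of the renewal process (the strong Markov property at the first passage over $n-\delta$, cf.\ formula~(5.7) in \cite{Gut:2009}), $\nu(n)-\nu(n-\delta)$ is stochastically dominated by an independent copy of $\nu(\delta)$, so $\me(\nu(n)-\nu(n-\delta))^{l}\le \me\nu(\delta)^{l}$; and $\nu(\delta)$ has exponentially decaying tails because $\xi>0$ a.s., so all these moments are finite. For the $N$-increments the fourth-moment bound of Lemma~\ref{impo4} only yields the claim for $c>1/4$, so the real work---and the main obstacle---is to upgrade Lemma~\ref{impo4} to $\me(N(x)-N(y))^{2l}\le C_l(x-y)^{2l}$ for all $l$ and $x-y>1$. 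This is in fact already within reach of the proof of Lemma~\ref{impo4}: the Burkholder--Davis--Gundy estimates \eqref{11} and \eqref{22} are recorded for general $l$ and give $\me(X(x,y))^{2l}\le C_l(x-y)^{l}$, while the drift term obeys $\me(Y(x,y))^{2l}\le C_l(x-y)^{2l}$ by the same Jensen/subadditivity reduction together with the moment asymptotics $z^{-2l}\me\nu(z)^{2l}\to \me\xi^{-2l}$ (Theorem~5.1 in \cite{Gut:2009}) now applied at order $2l$ instead of $4$. Taking $x-y=\delta+1$ fixed then gives the required uniform bound $\sup_n\me\big(N(n)-N(n-\delta-1)\big)^{2l}<\infty$, and the Borel--Cantelli step above completes the proof. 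The crux is thus verifying that every step of the fourth-moment computation survives verbatim at arbitrary even order, the only inputs being the finiteness of all moments of $\nu(1)$ (valid since $\xi>0$) and the all-order polynomial moment asymptotics of the renewal function.
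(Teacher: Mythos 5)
Your proposal is correct, but for part (b) it takes a genuinely different route from the paper. Part (a) you prove exactly as the paper does ($\eta>0$ gives $N(x)\le\nu(x)$, then the renewal SLLN, Theorem 5.1 in \cite{Gut:2009}). For part (b), however, the paper offers no self-contained argument at all: it simply cites the proof of Proposition 3.3 in \cite{Alsmeyer+Iksanov+Marynych:2016} for the relation involving $N$, and then obtains the $\nu$-relation by the observation that setting $\eta\equiv 1$ reduces $N$ to a shifted copy of $\nu$. You instead build everything from scratch: the $\nu$-increments are controlled by distributional subadditivity ($\nu(n)-\nu(n-\delta)$ stochastically dominated by an independent copy of $\nu(\delta)$, which has exponential tails since $\xi>0$ a.s.), and the $N$-increments by upgrading Lemma \ref{impo4} from fourth to arbitrary even moments, followed by Chebyshev and Borel--Cantelli with $2lc>1$; the reduction from real to integer arguments via monotonicity is also handled correctly. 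The upgrade of Lemma \ref{impo4} does go through as you claim: \eqref{11} is stated and proved for general $l$; \eqref{22} extends to order $2l$ because the martingale increments are bounded by $2$ in absolute value, so their $2l$-th conditional moments are dominated by a constant times the fourth (or one repeats the computation verbatim); and the drift term $Y(x,y)$ is handled at order $2l$ by the same Jensen-plus-subadditivity reduction, since the moment asymptotics in Theorem 5.1 of \cite{Gut:2009} hold at every order, not only the fourth (the limit there being $({\tt m})^{-2l}$, i.e.\ $(\me\xi)^{-2l}$, rather than $\me\xi^{-2l}$ read literally --- a harmless slip, as only boundedness is used). What each approach buys: the paper's citation is short and avoids duplicating work done elsewhere, whereas your argument is longer but self-contained within the tools of this paper (Lemmas \ref{impo1} and \ref{impo4}, Rosenthal's inequality from \cite{Chow+Teicher:2003}), and it makes explicit that the lemma requires no hypotheses on $(\xi,\eta)$ beyond a.s.\ positivity --- in particular no moment condition on $\eta$.
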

\begin{proof}
(a) Since the $\eta_k$ is a.s.\ positive, it follows that
$N(x)\leq \nu(x)$ a.s. It remains to note that $\nu(x)=O(x)$ a.s.\
as $x\to\infty$ by the strong law of large numbers for the renewal
processes, see Theorem 5.1 on p.~57 in \cite{Gut:2009}.

\noindent (b) The limit relation that involves $N$ can be found in
the proof of Proposition 3.3 in
\cite{Alsmeyer+Iksanov+Marynych:2016}. Setting $\eta\equiv 1$
immediately gives the second limit relation.
\end{proof}

\begin{Prop}\label{aux4}
Suppose that ${\tt s}^2={\rm Var}\,\xi\in (0, \infty)$. Then
\begin{equation}\label{123}
C\bigg(\bigg(\frac{\nu(t)-{\tt m}^{-1}t}{\sqrt{2{\tt s}^2{\tt
m}^{-3}t\log\log t}}\bigg): t\geq 3 \bigg)=[-1,1]\quad\text{{\rm
a.s.}}
\end{equation}
where ${\tt m}=\me\xi<\infty$, and
\begin{equation}\label{124}
{\lim\sup}_{n\to\infty}\frac{\sup_{0\leq y\leq n}|\nu(y)-{\tt
m}^{-1}y|}{\sqrt{2n\log\log n}}={\tt s}{\tt m}^{-3/2}\quad
\text{{\rm a.s.}}
\end{equation}
\end{Prop}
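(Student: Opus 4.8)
The plan is to deduce both \eqref{123} and \eqref{124} from the Hartman--Wintner law of the iterated logarithm for the random walk $(S_n)$, transferred to the renewal counting process $\nu$ by inversion. Write $b_n:=\sqrt{2{\tt s}^2 n\log\log n}$. Since ${\tt s}^2=\Var\,\xi\in(0,\infty)$, the classical cluster-set form of the Hartman--Wintner LIL applies: the set of limit points of $\big((S_n-{\tt m}n)/b_n\big)_{n\geq 3}$ is $[-1,1]$ a.s.

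First I would record the inversion identity. Since $\nu(t)=\inf\{n:S_n>t\}$, writing $n=\nu(t)$ gives $S_{n-1}\leq t<S_n$, whence
$$\nu(t)-{\tt m}^{-1}t=-{\tt m}^{-1}\big(S_{\nu(t)}-{\tt m}\nu(t)\big)+{\tt m}^{-1}\big(S_{\nu(t)}-t\big),\qquad 0\leq S_{\nu(t)}-t\leq \xi_{\nu(t)}.$$
The finiteness of $\Var\,\xi$ forces $\sum_n\Prob\{\xi>\varepsilon\sqrt{n}\}<\infty$ for every $\varepsilon>0$, so $\xi_n=o(\sqrt{n})$ a.s.\ by Borel--Cantelli; combined with the renewal SLLN $\nu(t)\sim t/{\tt m}$ a.s., the overshoot obeys $S_{\nu(t)}-t=o(\sqrt{t})=o(a(t))$ a.s., where $a(t)=\sqrt{2{\tt s}^2{\tt m}^{-3}t\log\log t}$. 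A direct computation using $\nu(t)/t\to{\tt m}^{-1}$ and $\log\log\nu(t)\sim\log\log t$ gives ${\tt m}^{-1}b_{\nu(t)}=a(t)(1+o(1))$ a.s. Substituting these into the identity yields
$$\frac{\nu(t)-{\tt m}^{-1}t}{a(t)}=-(1+o(1))\,\frac{S_{\nu(t)}-{\tt m}\nu(t)}{b_{\nu(t)}}+o(1)\quad\text{a.s.}$$

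For \eqref{123} I would then identify the cluster set of $g(t):=(S_{\nu(t)}-{\tt m}\nu(t))/b_{\nu(t)}$. Because $\nu$ is nondecreasing and takes the constant value $n$ on each a.s.-nonempty interval $[S_{n-1},S_n)$, the function $g$ equals $(S_n-{\tt m}n)/b_n$ there, so the limit points of $g(t)$ as $t\to\infty$ coincide with those of the sequence $\big((S_n-{\tt m}n)/b_n\big)$, namely $[-1,1]$. As $g$ is eventually bounded, the multiplicative $(1+o(1))$ and additive $o(1)$ perturbations above alter the displayed quantity by $o(1)$, so its cluster set equals $-[-1,1]=[-1,1]$, proving \eqref{123}.

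Finally, \eqref{123} gives $\limsup_{t\to\infty}|\nu(t)-{\tt m}^{-1}t|/a(t)=1$ a.s. For the lower bound in \eqref{124}, estimate the supremum from below by the endpoint value at $t=\lceil t_k\rceil$ along a sequence $t_k$ realizing this $\limsup$, using $a(\lceil t_k\rceil)\sim a(t_k)$. For the upper bound, fix $\varepsilon>0$; there is an a.s.\ finite random $T$ with $|\nu(t)-{\tt m}^{-1}t|\leq(1+\varepsilon)a(t)\leq(1+\varepsilon)a(n)$ for all $T\leq t\leq n$ (as $a$ is nondecreasing), while $\sup_{0\leq y\leq T}|\nu(y)-{\tt m}^{-1}y|$ is a finite constant, hence $o(a(n))$; thus $\limsup_n\sup_{0\leq y\leq n}|\nu(y)-{\tt m}^{-1}y|/a(n)\leq 1+\varepsilon$. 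Letting $\varepsilon\downarrow 0$ and recalling $a(n)={\tt s}{\tt m}^{-3/2}\sqrt{2n\log\log n}$ gives \eqref{124}. I expect the main obstacle to be the careful passage from the discrete-index LIL to continuous time: ensuring that the plateau structure of $\nu$ makes the cluster sets of $g$ and of the sequence genuinely coincide, and that the overshoot and scaling errors are uniformly $o(1)$ rather than merely in probability, for which the a.s.\ bound $\xi_n=o(\sqrt{n})$ and the renewal SLLN are decisive.
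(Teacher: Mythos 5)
Your proposal is correct, but it takes a genuinely different route from the paper. The paper deduces both \eqref{123} and \eqref{124} simultaneously from Strassen's \emph{functional} LIL for the renewal process (cited from Vervaat and from Gut's book): the rescaled paths $X_n(t)=(\nu(nt)-{\tt m}^{-1}nt)/\sqrt{2{\tt s}^2{\tt m}^{-3}n\log\log n}$ are a.s.\ relatively compact in the $J_1$-topology with limit set equal to the Strassen set $K$, and the continuous mapping theorem applied to the evaluation functional $x\mapsto x(1)$ and the supremum functional $x\mapsto\sup_{t\in[0,1]}|x(t)|$ gives the two statements after computing the extremes of these functionals over $K$. You instead start from the cluster-set (Hartman--Wintner--Strassen) form of the LIL for the underlying walk $(S_n)$ and transfer it to $\nu$ by inversion: the overshoot is controlled by $\xi_n=o(\sqrt n)$ a.s.\ (Borel--Cantelli, using $\me\xi^2<\infty$), the change of scale by the renewal SLLN, and the plateau structure of $\nu$ correctly identifies the continuous-parameter cluster set with the discrete one; all the estimates you invoke are sound. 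Your derivation of \eqref{124} from \eqref{123} is also valid and is worth emphasizing: because $a(t)=\sqrt{2{\tt s}^2{\tt m}^{-3}t\log\log t}$ is eventually nondecreasing with $a(t+1)\sim a(t)$, the limsup of the running supremum normalized by $a(n)$ necessarily coincides with $\limsup_{t\to\infty}|\nu(t)-{\tt m}^{-1}t|/a(t)$, so \eqref{124} --- which the authors remark they could not locate in the literature --- is in fact an elementary consequence of the known pointwise LIL \eqref{123}, with no functional LIL needed. What the paper's route buys is uniformity: any functional that is $J_1$-continuous on $K$ immediately yields an LIL with constant given by its supremum over $K$, which matters for functionals whose extreme over $K$ differs from the pointwise one (here the two happen to coincide, which is exactly why your shortcut works). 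What your route buys is economy and self-containedness: only the classical LIL for i.i.d.\ sums and standard renewal estimates are used.
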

\begin{Rem}
While formula \eqref{123} was known before, see, for instance,
Theorem 11.1 on p.~108 in \cite{Gut:2009}, we have not been able
to locate formula \eqref{124} in the literature. We derive both
\eqref{123} and \eqref{124} from a functional law of the iterated
logarithm. The proof of \eqref{123}, other than that mentioned on
p.~108 in \cite{Gut:2009}, is included, for it requires no extra
work in the given framework.
\end{Rem}
\begin{proof}[Proof of Proposition \ref{aux4}]
Denote by $D$ the Skorokhod space of right-continuous real-valued
functions which are defined on $[0,\infty)$ and have finite limits
from the left at each positive point. We shall need the commonly
used $J_1$-topology on $D$, see \cite{Billingsley:1968,
Whitt:2002}.

For integer $n\geq 3$, set $$X_n(t):=\frac{\nu(nt)-{\tt
m}^{-1}nt}{\sqrt{2{\tt s}^2{\tt m}^{-3}n\log\log n}},\quad t\geq
0.$$ We shall write $(X_n)$ for $(X_n(t))_{t\geq 0}$. Let $K$
denote the set of real-valued absolutely continuous functions $g$
on $[0,\infty)$ such that $g(0)=0$ and $\int_0^\infty
(g^\prime(t))^2{\rm d}t\leq 1$. The set $K$ is called the {\it
Strassen set}. It is known (see p.~44 in \cite{Vervaat:1972} or
Theorem 7.3 on p.~173 in \cite{Gut:2009}) that the sequence
$(X_n)_{n\geq 3}$ is, with probability one, relatively compact in
the $J_1$-topology, and the set of its limit points coincides with
$K$. The evaluation and the supremum functionals $h_1,h_2: D\to
\R$ defined by $h_1(x):=x(1)$ and
$h_2(x):=\sup_{t\in[0,1]}\,|x(t)|$, respectively, are continuous
in the $J_1$-topology at each $x\in K$. Hence, for $i=1,2$, by the
continuous mapping theorem $(h_i(X_n))_{n\geq 3}$ are, with
probability one, relatively compact in the $J_1$-topology, and the
sets of their limit points coincide with $h_i(K)$.

\noindent{\sc Proof of \eqref{123}}. We first show that
\eqref{123} holds with an integer argument replacing a continuous
argument. To this end, it remains to prove that $h_1(K)=[-1,1]$
which is a consequence of two facts: (I) $g(1)\in [-1,1]$ for each
$g\in K$; (II) each point of $[-1,1]$ is a possible value of
$g(1)$ for some $g\in K$.

Let $g\in K$ and $t\in (0,1]$. From
\begin{equation}\label{125}
(g(t))^2=\bigg(\int_0^t g^\prime (y){\rm d}y\bigg)^2\leq \int_0^t (g^\prime(y))^2{\rm d}y\int_0^t{\rm d}y\leq t
\end{equation}
it follows that $g(1)\in [-1,1]$. To prove (II), set $g^\pm_a(t):=\pm \min (t,a)$, for each $a\in [0,1]$. Then $g^\pm_a\in K$ and $g^\pm_a(1)=\pm a$.

Recall the notation $a(t)=\sqrt{2{\tt s}^2{\tt m}^{-3}t\log\log
t}$ for $t\geq 3$. To pass in \eqref{123} from an integer argument
to a continuous argument it is enough to check that if
$\lim_{k\to\infty}(\nu(t_k)-{\tt m}^{-1}t_k)/a(t_k) =b$ a.s.\ for
some sequence $(t_k)$ of {\it real numbers} and some $b\in\R\cup
\{\pm\infty\}$, then $\lim_{k\to\infty}(\nu(n_k)-{\tt
m}^{-1}n_k)/a(n_k)=b$ a.s.\ for some sequence $(n_k)$ of {\it
integers}. Writing
\begin{eqnarray*}
\frac{\nu([t_k])-{\tt m}^{-1}[t_k]}{a([t_k]+1)}-\frac{{\tt m}^{-1}}{a([t_k]+1)}&\leq& \frac{\nu(t_k)-{\tt m}^{-1}t_k}{a(t_k)}\leq \frac{\nu([t_k])-{\tt m}^{-1}[t_k]}{a([t_k])}\\&+&\frac{\nu([t_k]+1)-\nu([t_k])}{a([t_k])},
\end{eqnarray*}
where $[x]$ denotes the integer part of $x$,  and noting that
$\lim_{t\to\infty}(\nu(t+1)-\nu(t))/a(t)=0$ a.s.\ by Lemma
\ref{aux2}(b) and $\lim_{t\to\infty}a(t+1)/a(t)=1$ we conclude
that the implication above does indeed hold with $n_k:=[t_k]$.

\noindent{\sc Proof of \eqref{124}}. From what has been proved
above it follows that the left-hand side of \eqref{124} equals
$\sup_{g\in K}(\sup_{t\in [0,1]}\,|g(t)|)$ a.s. In view of
\eqref{125} the last expression does not exceed one. Since
$\sup_{t\in [0,1]}\,|g_1^+(t)|=1$ (recall that $g^+_1(t)=\min
(t,1)$), we infer $\sup_{g\in K}(\sup_{t\in [0,1]}\,|g(t)|)=1$
which completes the proof of \eqref{124}.
\end{proof}

\vspace{0.5cm}

\noindent   {\bf Acknowledgements}  \quad
\footnotesize The second and third authors would like to extend their sincere appreciation to the Deanship of Scientific Research at
King Saud University for funding their Research group No.  (RG-1437-020).

\normalsize

\end{document}